\newcommand{\ud}{\mathrm{d}}
\newcommand{\proba}{\mathbf{P}}
\newcommand{\R}{\mathbb{R}}
\newcommand{\eqdefr}{=\mathrel{\mathop:}}
\newcommand{\eqdefl}{\mathrel{\mathop:}=}
\newcommand{\ut}{\underline{t}}
\newcommand{\bss}{\mathcal{BSS}}
\numberwithin{equation}{section}
\theoremstyle{plain}
\newtheorem{thm}[equation]{Theorem}
\newtheorem{lem}[equation]{Lemma}
\newtheorem{cor}[equation]{Corollary}
\theoremstyle{definition}
\newtheorem{rem}[equation]{Remark}
\begin{document}

\title[Brownian semistationary processes]{Brownian semistationary processes and conditional full support}

\author{Mikko S.\ Pakkanen}

\address{
Department of Mathematics and Statistics \\
University of Helsinki\\
P.O.\ Box 68\\
FI-00014 Helsingin yliopisto \\
Finland}
\email{msp@iki.fi}
\urladdr{http://www.iki.fi/msp/}
\date{\today}

\begin{abstract}
In this note, we study the infinite-dimensional conditional laws of Brownian semistationary processes. Motivated by the fact that these processes are typically not semimartingales, we present sufficient conditions ensuring that a Brownian semistationary process has conditional full support, a property introduced by Guasoni, R\'asonyi, and Schachermayer [Ann.\ Appl.\ Probab.,\ 18 (2008) pp.\ 491--520]. By the results of Guasoni, R\'asonyi, and Schachermayer, this property has two important implications. It ensures, firstly, that the process admits no free lunches under proportional transaction costs, and secondly, that it can be approximated pathwise (in the sup norm) by semimartingales that admit equivalent martingale measures.
\end{abstract}

\keywords{Brownian semistationary process, conditional full support, non-semimartingale, conditional law}
\subjclass[2000]{Primary 60G10; Secondary 60H05}

\maketitle

\section{Introduction}

\subsection{Motivation} Originally, Brownian semistationary ($\bss$) processes were introduced by Barndorff-Nielsen and Schmiegel \cite{barnd} as phenomenological models of the behavior of scalar components of turbulent velocity fields. There has been also an emerging interest in applying these processes to the modeling of price dynamics in finance.  
These applications are facilitated by some recently-developed statistical methods for $\bss$ processes (see \cite{barnd2,barnd3}). A generic $\bss$ process $(X_t)_{t \in \R}$ is defined by  
\begin{equation}\label{bssprocess}
X_t \eqdefl \mu +  \int_{-\infty}^t g(t-s) \sigma_s \ud W_s + \int_{-\infty}^t q(t-s) a_s \ud s,
\end{equation}
where $\mu \in \R$ is a constant, $g$ and $q$ are ``suitable'' \emph{memory functions} on $(0,\infty)$, $(\sigma_s)_{s \in \R}$ and $(a_s)_{s \in \R}$ are c\`adl\`ag processes, and $(W_t)_{t \in \R}$ is a standard Brownian motion. The processes $\sigma$ and $a$ are called \emph{intermittency} (or \emph{volatility}) and \emph{drift} processes, respectively.

As discussed in \cite[pp.\ 4--5]{barnd}, in many cases, $\bss$ processes are \emph{not} semimartingales. This might be seen as an issue when these processes are used as models of price dynamics, since the well-known result of Delbaen and Schachermayer \cite[Theorem 7.2]{delba} implies that then there exist approximate arbitrage opportunities (dubbed \emph{free lunches with vanishing risk}) among simple trading strategies. However, for practical purposes, these arbitrage opportunities might not be that relevant, if they are not robust to various ``imperfections'' (e.g.\ illiquidity or transaction costs) present in real financial markets. Indeed, Guasoni, R\'asonyi, and Schachermayer \cite[Theorem 1.2]{guaso2} have recently shown that under arbitrarily small \emph{proportional transaction costs}, even a non-semimartingale is devoid of free lunches, provided that is has a particular distributional property, called \emph{conditional full support}. Thus, to study the applicability of $\bss$ processes to financial modeling, it is of interest to determine whether they have conditional full support.

Aside from this connection to mathematical finance, there are further reasons why the question, whether $\bss$ processes have conditional full support, is of interest. The conditional full support property, which basically entails that at any given time, the (infinite-dimensional) conditional law of the ``future'' of the process, given the ``past'', has the largest possible support, is also a desirable feature for a realistic model of turbulence. Additionally, it is worth mentioning that the aforementioned result of Guasoni, R\'asonyi, and Schachermayer can be stated also in purely probabilistic context---without any reference to mathematical finance. Namely, it ensures that a process with conditional full support can be approximated pathwise (in the sup norm) by \emph{semimartingales} that can be turned into \emph{martingales} by equivalent changes of measures (see \cite[Theorem 2.11]{guaso2}). 

\subsection{Conditional full support}

The conditional full support property is defined as follows. Fix a finite time horizon $T \in (0,\infty)$. For any $x \in \R$, denote by $C_x([u,v])$ the set of functions $f \in C( [u,v])$ such that $f(u) = x$. We say that continuous process $(X_t)_{t \in [0,T]}$ has \emph{conditional full support} (CFS), if for every $\ut \in [0,T)$ and a.e.\ $\omega \in \Omega$,
\begin{equation}\label{cfsdef}
\mathrm{supp}\big(\mathrm{Law}\big[(X_t)_{t \in [\ut,T]}\big| \mathscr{F}_{\ut}\big](\omega)\big) = C_{X_{\ut}(\omega)}([\ut,T]),
\end{equation}
where $\mathrm{Law}\big[(X_t)_{t \in [\ut,T]}\big| \mathscr{F}_{\ut}\big]$ is interpreted as a regular conditional law on $C([\ut,T])$, equipped with the sup norm and the associated Borel $\sigma$-algebra, $(\mathscr{F}_t)_{t \in[0,T]}$ is the natural filtration of $X$, and $\mathrm{supp}(\mu)$ is the \emph{support} of $\mu$, i.e.\ the smallest closed set with $\mu$-measure one. 

Notably, in \cite[p.\ 493]{guaso2} the CFS property was defined for a priori \emph{positive} processes (i.e.\ $C_{X_{\ut}(\omega)}([\ut,T])$ replaced with $C_{X_{\ut}(\omega)}([\ut,T],\R_+)$ in the definition above), but e.g.\ the exponential process $\mathrm{e}^{X_t}$, $t\in[0,T]$ has CFS in the sense of \cite{guaso2} if and only if $X$ has CFS in the sense of \eqref{cfsdef}, see Remark 2.1 of \cite{pakka}.

\subsection{Main result}

The purpose of this paper is to show that, in a finite time interval, the $\bss$ process $(X_t)_{t\in [0,T]}$, as defined by \eqref{bssprocess}, has CFS, when it satisfies the two sets of conditions given below. 

Firstly, to ensure that $X$, and some related processes that appear in the proofs below, are well-defined and have continuous modifications, we introduce the following conditions:
\begin{enumerate}[(i)]
\item\label{c.modif} $\int_{-\infty}^t q(t-s) a_s \ud s$, $t \in [0,T]$ is a.s.\ well-defined and has a continuous modification,
\item\label{c.integ} $\sup_{t \in (-\infty,T]} \mathbf{E}[\sigma^2_t]< \infty$,
\item\label{c.estim} $g \in L^2 ((0,\infty))$ and there exists $\alpha>0$ and $C>0$ such that
\begin{equation*}
\int_0^{\infty} g(s)^2 \ud u - \int_0^{\infty}  g(t+s)g(s) \ud s \leq C t^{\alpha} \quad\textrm{for all $t \in [0,T]$.}
\end{equation*}
\end{enumerate}
In particular, \eqref{c.integ} and \eqref{c.estim} imply that also the Brownian-driven part of $X$ has a continuous modification (by the Kolmogorov--Chentsov criterion).

Secondly, the CFS property clearly cannot hold if $X$ is degenerate. We rule this out through these remaining conditions:
\begin{enumerate}[(i)]\setcounter{enumi}{3}
\item\label{c.decomp} $W = \beta \bar{W} + \sqrt{1-\beta^2}\bar{W}^{\perp}$ for some $\beta \in (-1,1)$, where $\bar{W}$ and $\bar{W}^{\perp}$ are independent standard Brownian motions, such that $\sigma$ and $a$ are independent of $\bar{W}^{\perp}$ and adapted to some filtration with respect to which $\bar{W}$ is a semimartingale,
\item\label{c.snondeg} $\lambda (\{t \in [0,T] : \sigma_t = 0 \})= 0$ a.s.,
\item\label{c.gnondeg} $\int_0^{\varepsilon} |g(s)|\ud s>0$ for all $\varepsilon>0$.
\end{enumerate}
(Here, and in what follows, $\lambda$ stands for the Lebesgue measure on the real line.)
Conditions \eqref{c.snondeg} and \eqref{c.gnondeg} are rather minimal, simply needed to guarantee that the intermittency process $\sigma$ and the memory function $g$ do not vanish on a set with positive measure and near the origin, respectively. Condition \eqref{c.decomp}, which requires existence of a driving Brownian component independent of the drift and intermittency processes, is admittedly more restrictive. However, if it was omitted, the other conditions alone would not suffice, see Remark \ref{dependent} below for further discussion.

To give a concrete example, as pointed out in \cite[pp.\ 4--5]{barnd2}, the functions
\begin{equation*}
g(t) \eqdefl t^{\kappa}\mathrm{e}^{-\rho t}, \quad t \in (0,\infty),
\end{equation*}
where $\kappa \in (-1/2,0) \cup (0,1/2)$ and $\rho>0$, satisfy \eqref{c.estim}, and clearly also \eqref{c.gnondeg}.
Regarding the intermittency process $\sigma$, we note that by Fubini's theorem, \eqref{c.snondeg} holds if $\proba[\sigma_t = 0] =0$ for all $t \in [0,T]$.
In particular, when $\sigma$ is stationary, and its stationary law $\mu_{\sigma}$ satisfies $\mu_{\sigma}(\{0\})=0$ and $\int_{\R} x^2 \mu_{\sigma}(\ud x)<\infty$, clearly both \eqref{c.integ} and \eqref{c.snondeg} hold.

It turns out that when conditions \eqref{c.modif}--\eqref{c.gnondeg} are in force, instead of $\bss$ processes of the \emph{precise} form \eqref{bssprocess}, it is more convenient to establish CFS for slightly more general processes.

\begin{thm}[Conditional full support]\label{cfsforbss}
Let $(Y_t)_{t\in[0,T]}$ be a continuous process, $(\sigma_t)_{t \in (-\infty,T]}$ a c\`adl\`ag process that satisfies conditions \eqref{c.integ} and \eqref{c.snondeg}, $(B_t)_{t \in (-\infty,T]}$ a Brownian motion, and $g : (0,\infty)\rightarrow \R$ a function that satisfies conditions \eqref{c.estim} and \eqref{c.gnondeg}. If $(Y,\sigma)$ is independent of $B$, then
process
\begin{equation}\label{genbss}
Z_t \eqdefl Y_t + \int_{-\infty}^t g(t-s) \sigma_s \ud B_s, \quad t \in [0,T]
\end{equation}
has CFS. 
\end{thm}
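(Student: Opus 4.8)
The plan is to reduce the CFS property to a statement about the conditional law of the Gaussian integral term and then use a small-ball/full-support argument for Gaussian processes, conditioning on the independent pair $(Y,\sigma)$. Fix $\ut \in [0,T)$. Since $C_{Z_{\ut}}([\ut,T])$ is determined by $Z_{\ut}$, and conditioning on $\mathscr{F}_{\ut}$ fixes the value $Z_{\ut}$, it suffices to show that, conditionally on $\mathscr{F}_{\ut}$, the increment process $(Z_t - Z_{\ut})_{t\in[\ut,T]}$ has support equal to all of $C_0([\ut,T])$. Writing $Z_t - Z_{\ut} = (Y_t - Y_{\ut}) + (I_t - I_{\ut})$ with $I_t \eqdefl \int_{-\infty}^t g(t-s)\sigma_s\,\ud B_s$, I would split $I_t$ into the "old" part $\int_{-\infty}^{\ut} g(t-s)\sigma_s\,\ud B_s$ (which is $\mathscr{F}_{\ut}$-measurable up to dependence on $B$ restricted to $(-\infty,\ut]$, but is in any case a continuous process whose increments over $[\ut,T]$ we can absorb into the "known" part by an approximation) and the "new" part $N_t \eqdefl \int_{\ut}^{t} g(t-s)\sigma_s\,\ud B_s$. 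The key point is that it is enough to prove that the conditional support of $(N_t)_{t\in[\ut,T]}$ contains $C_0([\ut,T])$: once $N$ can approximate an arbitrary continuous path started at $0$, it can in particular absorb $-(Y_t-Y_{\ut})$ minus the increments of the old parts, all of which are continuous and fixed (or themselves approximable) once we condition.

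The heart of the argument is therefore: for a.e.\ fixed realization of $\sigma$ (using independence of $\sigma$ and $B$, the conditional law of $N$ given $(Y,\sigma,\mathscr{F}_{\ut}^B)$ is the law of a Gaussian process driven by $B|_{[\ut,T]}$ with the given $\sigma$), show that the Gaussian process $N_t = \int_{\ut}^t g(t-s)\sigma_s\,\ud B_s$ has full support $C_0([\ut,T])$. I would do this by a Stroock–Varadhan-type support theorem or, more elementarily, by the following two-step scheme. First, approximate: given a target $\varphi \in C_0([\ut,T])$ and $\delta>0$, one wants $\proba[\sup_{t\in[\ut,T]}|N_t-\varphi(t)|<\delta] > 0$. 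Using condition \eqref{c.gnondeg} (so $g$ does not vanish near $0$) and condition \eqref{c.snondeg} (so $\sigma$ is a.e.\ nonzero on $[\ut,T]$), one shows the Gaussian measure has full topological support by checking that its Cameron–Martin space is dense in $C_0([\ut,T])$, or by a direct martingale/time-change argument: on short intervals the dominant contribution to $N_{t+h}-N_t$ is $\int_t^{t+h} g(t+h-s)\sigma_s\,\ud B_s$, which behaves like a nondegenerate Brownian increment because $g$ is $L^2$-nonvanishing near $0$; the regularity estimate \eqref{c.estim} gives the Kolmogorov–Chentsov continuity (already noted in the excerpt) needed to make the small-ball estimate uniform. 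Second, and this is really the same computation viewed correctly, one uses that a centered Gaussian measure on a separable Banach space assigns positive mass to every ball centered at a point of its support, and the support is the closure of the Cameron–Martin space; so the task collapses to: the Cameron–Martin space of $N$ is dense in $C_0([\ut,T])$.

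I would carry out the steps in this order: (1) reduce CFS for $Z$ to: conditionally on $\mathscr{F}_{\ut}$, the support of $(Z_t-Z_{\ut})_{t\in[\ut,T]}$ is all of $C_0([\ut,T])$; (2) condition on $(Y,\sigma)$ and on the past of $B$, reducing to a full-support statement for the Gaussian process $N$ with $\sigma$ frozen; (3) identify the Cameron–Martin space of $N$ and show, using \eqref{c.gnondeg} and \eqref{c.snondeg}, that it is dense in $C_0([\ut,T])$; (4) invoke the general fact that a Gaussian process has topological support equal to the closure of its Cameron–Martin space, concluding full support for $N$; (5) add back $Y_t-Y_{\ut}$ and the old parts of $I$ via the shift/approximation in step (1) and integrate out the conditioning to recover \eqref{cfsdef}. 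The main obstacle I anticipate is step (3): proving density of the Cameron–Martin space in sup norm when $\sigma$ is merely a fixed càdlàg (possibly very irregular, random but frozen) function that may vanish at many points and when $g$ may blow up at $0$ (as in the example $g(t)=t^\kappa e^{-\rho t}$ with $\kappa<0$). Handling the interaction of the singularity of $g$ at the origin with the zero set of $\sigma$ — in particular controlling the Volterra operator $f\mapsto \int_{\ut}^{\cdot} g(\cdot-s)\sigma_s f(s)\,\ud s$ and showing its range is dense — is where the real work lies, and I expect condition \eqref{c.estim} to be exactly what is needed to keep this operator well-behaved enough (Hölder-type mapping properties) for the density argument to go through.
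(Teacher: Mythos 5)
Your overall strategy coincides with the paper's: enlarge the filtration so that the whole trajectories of $Y$ and $\sigma$ (and the past of $B$) are known at time $\ut$, absorb the ``old'' part $\int_{-\infty}^{\ut}g(t-s)\sigma_s\,\ud B_s$ of the stochastic integral into a known continuous path, observe that conditionally the remaining term $N_t=\int_{\ut}^t g(t-s)\sigma_s\,\ud B_s$ is a Gaussian process with frozen volatility, and identify its support as the closure of its reproducing kernel (Cameron--Martin) space. Two of the points you treat as routine do require justification but are handled in the paper by citable results: the passage from CFS with respect to the enlarged filtration back to the natural filtration (Corollary 2.9 of \cite{pakka}, which you gloss as ``integrate out the conditioning''), and the Gaussianity of the conditional law on path space, which the paper establishes via conditional characteristic functions together with a $\pi$-system argument over finite-dimensional cylinder sets (Lemma \ref{Gaussianity} combined with the disintegration theorem).

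The genuine gap is exactly the step you flag yourself: the density in $C_0([\ut,T])$ of the range of the Volterra operator $(K_f h)(t)=\int_{\ut}^t g(t-s)f(s)h(s)\,\ud s$ with $f=\sigma(\omega)$ frozen. You state the problem but do not solve it, and you guess that condition \eqref{c.estim} is what makes it work; in fact \eqref{c.estim} plays no role there---it is used only to obtain continuity of the paths and of the covariance function, so that Kallianpur's support theorem applies. The density is obtained from condition \eqref{c.gnondeg} alone, via Cherny's convolution lemma for the case $f\equiv 1$, followed by a truncation argument: given a target approximated by $K_1\tilde h$ with $\tilde h$ bounded, one removes the small set $A_\delta=f^{-1}([-\delta,\delta])$ (small because $f\neq 0$ a.e., which is where condition \eqref{c.snondeg} enters) and sets $\hat h=(\tilde h/f)\mathbf{1}_{[\ut,T]\setminus A_\delta}$, so that $K_f\hat h=K_1(f\hat h)$ is uniformly close to $K_1\tilde h$. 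Without this (or an equivalent) argument the proof is incomplete at its central point, since the interaction between the zero set of $\sigma$ and the possible singularity of $g$ at the origin is precisely what the theorem must control.
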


Finally, substitutions
\begin{equation*}
Y_t \eqdefl \mu + \beta \int_{-\infty}^{t}  g(t-s) \sigma_s \ud \bar{W}_t + \int_{-\infty}^{t} q(t-s) a_s \ud s, \quad B_t \eqdefl \sqrt{1-\beta^2}\bar{W}^{\perp}_t,
\end{equation*}
yield the promised result:

\begin{cor} If conditions \eqref{c.modif}--\eqref{c.gnondeg} hold, then the $\bss$ process $(X_t)_{t \in [0,T]}$, as defined by \eqref{bssprocess}, has CFS.
\end{cor}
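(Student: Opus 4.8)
The plan is to prove Theorem~\ref{cfsforbss}; the Corollary is then immediate from the displayed substitution, since~\eqref{c.modif} and~\eqref{c.estim} make that $Y$ a continuous process,~\eqref{c.integ},~\eqref{c.snondeg} and~\eqref{c.estim},~\eqref{c.gnondeg} are exactly the hypotheses imposed there on $\sigma$ and $g$, and~\eqref{c.decomp} makes $(Y,\sigma)$ independent of $B$. Fix $\ut\in[0,T)$, write $Z_t=Y_t+X_t$ with $X_t\eqdefl\int_{-\infty}^{t}g(t-s)\sigma_s\,\ud B_s$, and for $t\in[\ut,T]$ decompose $X_t=X'_t+X''_t$, where $X'_t\eqdefl\int_{-\infty}^{\ut}g(t-s)\sigma_s\,\ud B_s$ and $X''_t\eqdefl\int_{\ut}^{t}g(t-s)\sigma_s\,\ud B_s$. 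The first move is to replace the natural filtration $\mathscr{F}_\ut$ of $Z$ by the larger $\sigma$-algebra $\mathscr{G}_\ut\eqdefl\mathscr{F}^{B}_\ut\vee\sigma\big((Y_t)_{t\in[0,T]},(\sigma_s)_{s\in(-\infty,T]}\big)\supseteq\mathscr{F}_\ut$. Since the $\mathscr{F}_\ut$-conditional law of $(Z_t)_{t\in[\ut,T]}$ is an $\mathscr{F}_\ut$-mixture of its $\mathscr{G}_\ut$-conditional laws, its support a.s.\ contains that of a.e.\ mixture component; as moreover the inclusion $\mathrm{supp}(\cdots)\subseteq C_{Z_\ut}([\ut,T])$ in~\eqref{cfsdef} holds automatically ($Z_\ut$ being $\mathscr{F}_\ut$-measurable), it is enough to prove: for a.e.\ $\omega$, the $\mathscr{G}_\ut$-conditional law of $(Z_t)_{t\in[\ut,T]}$ has support $C_{Z_\ut(\omega)}([\ut,T])$.

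The point of this enlargement is that $(Y_t)_{t\in[\ut,T]}$, $(\sigma_t)_{t\in[\ut,T]}$ and $(X'_t)_{t\in[\ut,T]}$ are $\mathscr{G}_\ut$-measurable, whereas $(B_t-B_\ut)_{t\in[\ut,T]}$ is independent of $\mathscr{G}_\ut$ (using the independence of $(Y,\sigma)$ from $B$ and of Brownian increments). Hence, conditionally on $\mathscr{G}_\ut$, the path $(Z_t)_{t\in[\ut,T]}$ is distributed as $\varphi+N$, where $\varphi\eqdefl(Y_t+X'_t)_{t\in[\ut,T]}$ is a \emph{fixed} continuous function with $\varphi(\ut)=Z_\ut$ --- it is continuous because $Z$ and $Y$ are, while $X''$ has, by~\eqref{c.estim} and Kolmogorov--Chentsov, a continuous modification (one uses the elementary consequence $\int_0^{h}g(u)^2\,\ud u\le 2Ch^{\alpha}$ of~\eqref{c.estim} to bound $\Xpct[(X''_t-X''_s)^2]$) --- and $N\eqdefl(X''_t)_{t\in[\ut,T]}$ is, conditionally, a \emph{centered Gaussian} random element of $C_0([\ut,T])$ with covariance $\int_{\ut}^{s\wedge t}g(t-u)g(s-u)\sigma_u^2\,\ud u$. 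Since $\varphi+C_0([\ut,T])=C_{Z_\ut}([\ut,T])$, it remains to show that, for a.e.\ realisation of $\sigma$, the conditional law of $N$ has support all of $C_0([\ut,T])$.

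Here I would invoke the classical fact that the support of a centered Gaussian measure on the separable Banach space $(C_0([\ut,T]),\|\cdot\|_\infty)$ equals the closure of its Cameron--Martin space, which for $N$ is $\{\,t\mapsto\int_{\ut}^{t}g(t-u)\sigma_u\psi(u)\,\ud u:\psi\in L^2([\ut,T])\,\}=\{\,G(\sigma\psi):\psi\in L^2([\ut,T])\,\}$, where $G\colon L^2([\ut,T])\to C_0([\ut,T])$ is the Volterra operator $(G\phi)(t)\eqdefl\int_{\ut}^{t}g(t-u)\phi(u)\,\ud u$. Condition~\eqref{c.estim} makes $G$ bounded (Cauchy--Schwarz for the bound, the $L^2$-modulus estimate in~\eqref{c.estim} for continuity of $t\mapsto(G\phi)(t)$), while condition~\eqref{c.snondeg} gives $\sigma_u\neq0$ for a.e.\ $u$, so $\{\,\sigma\psi:\psi\in L^2([\ut,T])\,\}$ is dense in $L^2([\ut,T])$; by continuity of $G$ the $\|\cdot\|_\infty$-closure of the Cameron--Martin space of $N$ therefore equals $\overline{G(L^2([\ut,T]))}$, and the proof is finished as soon as $G$ is shown to have \emph{dense range} in $C_0([\ut,T])$.

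That density is the heart of the matter, and it is where condition~\eqref{c.gnondeg} is used. By Hahn--Banach it suffices to check that every finite signed Borel measure $\mu$ on $[\ut,T]$ with $\int(G\phi)\,\ud\mu=0$ for all $\phi\in L^2([\ut,T])$ belongs to $\R\delta_{\ut}$. For such $\mu$, Fubini recasts the orthogonality as $\int_{(s,T]}g(t-s)\,\mu(\ud t)=0$ for a.e.\ $s\in[\ut,T]$; dropping the (harmless) atom at $\ut$ and writing $\mu'\eqdefl\mu|_{(\ut,T]}$ and $h_0(x)\eqdefl g(-x)\mathbf{1}_{[\ut-T,0)}(x)\in L^1(\R)$, this says exactly that the $L^1$-function $h_0\ast\mu'$ vanishes a.e.\ on $[\ut,\infty)$, so that $\sup(\mathrm{ess\,supp}(h_0\ast\mu'))\le\ut$. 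On the other hand condition~\eqref{c.gnondeg} is precisely the assertion $\sup(\mathrm{ess\,supp}\,h_0)=0$, so the Titchmarsh convolution theorem yields $\sup(\mathrm{ess\,supp}(h_0\ast\mu'))=\sup(\mathrm{ess\,supp}\,h_0)+\sup(\mathrm{supp}\,\mu')=\sup(\mathrm{supp}\,\mu')$; but a nonzero $\mu'$ supported in $(\ut,T]$ has $\sup(\mathrm{supp}\,\mu')>\ut$, a contradiction, whence $\mu'=0$ and $\mu\in\R\delta_{\ut}$. I expect this Titchmarsh step --- turning the non-degeneracy of $g$ at the origin into the density of the range of $G$ --- to be the main obstacle; by comparison the mixture/conditioning reduction, the Cameron--Martin description of Gaussian supports, the moment bounds from~\eqref{c.estim}, and the accompanying measurability bookkeeping are comparatively routine.
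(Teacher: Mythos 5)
Your proposal is correct and, at the structural level, follows the paper's route exactly: deduce the Corollary from Theorem~\ref{cfsforbss} via the displayed substitution, enlarge the filtration to freeze $(Y,\sigma)$, identify the $\mathscr{G}_{\ut}$-conditional law of $Z$ as a shifted centered Gaussian measure, describe its support as the closure of the associated Cameron--Martin/reproducing-kernel space (Kallianpur's theorem \cite{kalli} in the paper), and reduce the density of $\{G(\sigma\psi)\}$ to that of $G(L^2)$ using $\sigma\neq0$ a.e.\ (your density-of-$\sigma L^2$ argument and the paper's truncation $\hat h=\tilde h/f\cdot\mathbf{1}_{[\ut,T]\setminus A_\delta}$ in Lemma~\ref{density} are interchangeable). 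The one substantive divergence is the last step: the paper takes the density of the range of $K_1$ in $C_0([\ut,T])$ as a black box from Cherny \cite[Lemma 2.1]{chern}, whereas you re-derive it via Hahn--Banach duality and the Titchmarsh convolution theorem; your argument is sound (the atom at $\ut$ is correctly quotiented out, and \eqref{c.gnondeg} is exactly $\sup\operatorname{ess\,supp}h_0=0$), and it is in fact essentially Cherny's own proof, so you have made the paper's main citation self-contained rather than found a new route. The only place where you are lighter than the paper is the identification of the conditional law as Gaussian, which the paper carries out carefully via Lemma~\ref{Gaussianity}, the disintegration theorem, and a countable $\pi$-system of cylinder sets; your appeal to independence of $(B_t-B_{\ut})_{t\ge\ut}$ from $\mathscr{G}_{\ut}$ is the right idea but would need that bookkeeping to be complete.
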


\begin{rem}\label{dependent}
In Theorem \ref{cfsforbss}, it is crucial that $(Y,\sigma)$ is independent of $B$. For instance, in the (semimartingale) special case $g(t) = 1$ for all $t \in [0,T]$, there exist strictly positive integrands $\sigma$ that depend on $B$ and prevent $Z$ from having CFS. As pointed out in \cite[Example 3.10]{pakka}, this is the case e.g.\ when $Y_t \eqdefl 1$ and $\sigma_t \eqdefl \mathrm{e}^{B_t - t/2}\mathbf{1}_{[0,T]}(t)$.   
\end{rem}

\section{Proof of Theorem \ref{cfsforbss}}

We introduce first some notations. In what follows, all random variables and stochastic processes are defined on a complete probability space $(\Omega,\mathscr{F},\proba)$. As usual, $\| \cdot \|_2$, $\| \cdot\|_{\infty}$, and $\| \cdot \|_{\mathrm{op}}$ stand for the $L^2$, sup, and operator norms, respectively. Additionally, for any $t \in [u,v]$, we define $\pi_t : C([u,v]) \rightarrow \R$ to be the evaluation map at $t$, i.e.\ $\pi_t(h) = h(t)$, and we denote by $\phi_d(\,\cdot \, ;\boldsymbol{\mu},\boldsymbol{\Sigma})$ the probabilitity density function of the $d$-dimensional Gaussian distribution with mean $\boldsymbol{\mu}$ and covariance $\boldsymbol{\Sigma}$.

The proof of Theorem \ref{cfsforbss} builds upon the earlier proofs of CFS for Brownian moving averages \cite{chern} and stochastic integrals \cite{pakka}.
Similarly to the ``freezing'' argument used in \cite{pakka}, by a suitable conditioning, we may regard the processes $Y$ and $\sigma$ in \eqref{genbss} as deterministic functions, reducing $Z$ to a Gaussian process. Thus, to establish CFS through this method, we need to characterize the support of this Gaussian process.

\begin{lem}[Support]\label{support}
Let $\ut \in [0,T)$, $g:(0,\infty) \rightarrow \R$ a function that satisfies conditions \eqref{c.estim} and \eqref{c.gnondeg}, $f \in L^{\infty}( [\ut,T] )$, $b \in C_0([\ut,T])$, and $(W_t)_{t \in [0,T]}$ a standard Brownian motion. Moreover, define continuous Gaussian process $(J^{b,f}_t)_{t \in [\ut,T]}$ by
\begin{equation*}
J^{b,f}_t \eqdefl b(t) + \int_{\ut}^t g(t-s)f(s) \ud W_s, \quad t \in [\ut,T].
\end{equation*}
If
$f \neq 0$ a.e., then $
\mathrm{supp}\big(\mathrm{Law}\big[(J^{b,f}_t)_{t \in [\ut,T]}\big]\big) = C_{0}([\ut,T])
$.
\end{lem}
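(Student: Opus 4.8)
The goal is to show the Gaussian process $J^{b,f}$ has full support $C_0([\ut,T])$ in sup norm. Since $J^{b,f}_{\ut}=b(\ut)=0$ (as $b\in C_0$), the support is certainly contained in $C_0([\ut,T])$; the content is the reverse inclusion. Because $b(t)$ is a deterministic shift, it suffices to show the centered Gaussian process $I^f_t\eqdefl\int_{\ut}^t g(t-s)f(s)\,\ud W_s$ has support equal to the closure of its Cameron--Martin space (reproducing kernel Hilbert space), and then that this RKHS is dense in $C_0([\ut,T])$; adding a fixed $b\in C_0$ just translates the support, and $b$ lies in the closure $C_0([\ut,T])$, so the translated support is still all of $C_0([\ut,T])$. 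So the plan reduces to: (1) identify/lower-bound the RKHS of $I^f$, and (2) prove density in the sup norm.

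The standard tool is: for a centered Gaussian measure on a separable Banach space, the topological support equals the closure of the Cameron--Martin space. The Cameron--Martin space of $I^f$ consists of functions $h_\varphi(t)=\int_{\ut}^t g(t-s)f(s)\varphi(s)\,\ud s$ for $\varphi\in L^2([\ut,T])$ (these are exactly the means of $I^f$ under the Girsanov shifts $\ud W_s\mapsto\ud W_s+\varphi(s)\ud s$), with the sup-norm closure being the support. So it is enough to show
\[
\overline{\Big\{\,t\mapsto\int_{\ut}^t g(t-s)f(s)\varphi(s)\,\ud s \;:\; \varphi\in L^2([\ut,T])\,\Big\}}^{\,\|\cdot\|_\infty}=C_0([\ut,T]).
\]
Because $f\neq 0$ a.e., the map $\varphi\mapsto f\varphi$ has dense range in (say) $L^2$, so it suffices to show the Volterra-type convolution operator $\psi\mapsto\big(t\mapsto\int_{\ut}^t g(t-s)\psi(s)\,\ud s\big)$, applied to a dense class of $\psi$, yields a sup-norm-dense subset of $C_0([\ut,T])$. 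Here condition \eqref{c.gnondeg} ($\int_0^\varepsilon|g(s)|\ud s>0$ for all $\varepsilon>0$) is what prevents the operator from being degenerate near the diagonal, and condition \eqref{c.estim} ($g\in L^2$ plus the Hölder-type bound on the autocorrelation increments) is what guarantees $J^{b,f}$ is genuinely continuous so that we are working in $C_0([\ut,T])$ in the first place and the Kolmogorov--Chentsov estimate applies. To handle density, I would approximate a target $w\in C_0([\ut,T])$ by piecewise-linear functions and note that if $g$ were, say, $g\equiv 1$ one recovers the classical fact that $\{\int_{\ut}^\cdot\psi:\psi\in L^2\}$ is dense in $C_0$; for general $g$ satisfying \eqref{c.gnondeg} one can still invert the convolution approximately — e.g. choose $\psi$ to be a sum of narrow spikes of appropriate signed mass placed so that the running integral against $g$ tracks the increments of $w$, using that $\int_0^\varepsilon|g|>0$ to calibrate the spike amplitudes, and controlling the error by $\|g\|_2$ via Cauchy--Schwarz.

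The main obstacle is precisely step (2), the sup-norm density of the range of the weighted Volterra operator $\varphi\mapsto\int_{\ut}^\cdot g(\cdot-s)f(s)\varphi(s)\,\ud s$ in $C_0([\ut,T])$, when $g$ is merely $L^2$ with a possible singularity at $0$ and $f$ is only bounded and a.e. nonzero (so $f$ may be arbitrarily small on parts of the interval). One cannot in general solve $\int_{\ut}^t g(t-s)f(s)\varphi(s)\,\ud s=w(t)$ exactly for $\varphi\in L^2$; the correct statement is only approximate solvability, and the argument must be uniform in $t$. I expect to localize: on a short window $[\ut,\ut+\delta]$ use the near-diagonal mass $\int_0^\delta|g|>0$ from \eqref{c.gnondeg} together with $f\neq 0$ a.e. to build a perturbation that adjusts the value of $J^{b,f}$ by a prescribed amount while barely affecting earlier values, then iterate along a fine partition of $[\ut,T]$ — essentially a controllability/bang-bang construction — with the cumulative sup-norm error bounded using \eqref{c.estim}. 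This is the same mechanism as in the Brownian moving average case \cite{chern} and the stochastic integral case \cite{pakka}, adapted to carry the bounded weight $f$; the novelty here is only that $f$ is a general a.e.-nonzero bounded function rather than a constant, which is absorbed by first reducing to a dense set of $\varphi$ on which $f\varphi$ is well-behaved.
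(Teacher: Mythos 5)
Your overall architecture is exactly the paper's: reduce to the centered case (the shift by $b\in C_0([\ut,T])$ merely translates the support within $C_0([\ut,T])$), identify the support of the centered Gaussian process as the sup-norm closure of its reproducing kernel Hilbert space via Kallianpur's support theorem, identify that RKHS with the range of the weighted Volterra operator $K_f$, and reduce the weighted density question to the unweighted one ($f\equiv 1$) using that multiplication by an a.e.\ nonzero bounded $f$ has dense range in $L^2$ together with the bound $\|K_1\|_{\mathrm{op}}\leq\|g\|_2$. All of that matches Lemma \ref{density} and the proof of Lemma \ref{support} in the paper, modulo presentation.

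The genuine gap is in your step (2), the sup-norm density of the range of the \emph{unweighted} operator $h\mapsto\int_{\ut}^{\cdot}g(\cdot-s)h(s)\,\ud s$ in $C_0([\ut,T])$. The paper does not prove this; it cites Cherny's convolution lemma \cite[Lemma 2.1]{chern}, whose argument is a duality one (a signed measure annihilating the range is shown to vanish, using precisely $\int_0^{\varepsilon}|g|>0$). Your proposed substitute --- placing narrow spikes whose running integrals against $g$ ``track the increments'' of the target $w$ --- does not go through as described: a spike of mass $c$ at time $s_0$ contributes $c\,g(t-s_0)$ to \emph{every} later value $t>s_0$, and since $g$ is only in $L^2$, may change sign, oscillate, or even vanish identically on $[\varepsilon,\infty)$ (which is compatible with condition \eqref{c.gnondeg}), its influence at later times is not a step you can simply stack. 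Calibrating amplitudes by $\int_0^{\delta}|g|$ and bounding the accumulated error of an increasingly fine partition by Cauchy--Schwarz against $\|g\|_2$ does not control the interference between spikes, and the required $L^2$ norm of $\psi$ can blow up as the partition is refined when $g$ has little mass near the origin. Either invoke Cherny's lemma for the $f\equiv1$ case, as the paper does, or replace the constructive sketch by the annihilator argument; as written, the core density claim is asserted rather than proved.
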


We will prove Lemma \ref{support} by applying the support theorem of Kallianpur \cite[Theorem 3]{kalli}, which states that the support of the law of a continuous Gaussian process is the closure of the associated \emph{reproducing kernel Hilbert space} (for the definition and basic properties, we refer to \cite[pp.\ 93--101]{grena}) in the corresponding path space of continuous functions, equipped with the sup norm. In the case of $J^{b,f}$, the reproducing kernel Hilbert space coincides with the range of a particular integral operator, which we determine first through a slight refinement of the key convolution lemma \cite[Lemma 2.1]{chern}, due to Cherny.

\begin{lem}[Density]\label{density}
Let $\ut \in [0,T)$, $g\in L^2((0,\infty))$, and $f \in L^{\infty}( [\ut,T] )$. If
$g$ satisfies condition \eqref{c.gnondeg} and $f \neq 0$ a.e., then the range of the integral operator $K_f: L^2( [\ut,T] ) \rightarrow C_{0}( [\ut,T] )$, defined by 
\begin{equation}\label{intoper}
 (K_f h)(t) \eqdefl \int_{\ut}^{t} g(t-s)f(s)h(s) \ud s, \quad t \in [\ut,T],
\end{equation}
is dense in $C_0([\ut,T])$.
\end{lem}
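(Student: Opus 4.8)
The plan is a duality argument in the Banach space $C_0([\ut,T])$. By the Hahn--Banach theorem, the range of $K_f$ is dense in $C_0([\ut,T])$ if and only if every bounded linear functional on $C_0([\ut,T])$ that annihilates $\mathrm{ran}(K_f)$ vanishes identically. Extending such a functional to $C([\ut,T])$ and representing it via the Riesz representation theorem, it takes the form $u\mapsto\int_{[\ut,T]}u\,\ud\nu$ for some finite signed Borel measure $\nu$ on $[\ut,T]$; since $(K_fh)(\ut)=0$ for every $h$, the mass of $\nu$ at $\ut$ is irrelevant, so the task reduces to the following: if $\nu$ is a finite signed measure concentrated on $(\ut,T]$ with $\int_{(\ut,T]}(K_fh)(t)\,\nu(\ud t)=0$ for all $h\in L^2([\ut,T])$, then $\nu=0$.

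I would first remove the factor $f$. Since $f\in L^\infty([\ut,T])$ and $f\neq0$ a.e., the subspace $\{fh:h\in L^2([\ut,T])\}$ is dense in $L^2([\ut,T])$ (anything orthogonal to it is killed by $f$, hence is $0$ a.e.), and $K_fh=K_1(fh)$, where $K_1$ is bounded from $L^2([\ut,T])$ into $C_0([\ut,T])$ with $\|K_1h\|_\infty\le\|g\|_2\|h\|_2$ (here $g\in L^2((0,\infty))$ enters). Consequently $\overline{\mathrm{ran}(K_f)}=\overline{\mathrm{ran}(K_1)}$, and it suffices to treat $f\equiv1$.

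Now take $\nu$ concentrated on $(\ut,T]$ with $\int(K_1h)\,\ud\nu=0$ for all $h$. Fubini's theorem applies here: by the Cauchy--Schwarz inequality, $\int_{(\ut,T]}\int_\ut^t|g(t-s)|\,|h(s)|\,\ud s\,|\nu|(\ud t)\le\|g\|_2\|h\|_2\,|\nu|((\ut,T])<\infty$, and the diagonal $\{s=t\}$ is $\lambda\otimes|\nu|$--null, so $g(t-s)$ is evaluated only off the diagonal, where it is well defined. Interchanging the integrals yields $\int_\ut^T h(s)\,\psi(s)\,\ud s=0$ for all $h\in L^2([\ut,T])$, where $\psi(s)\eqdefl\int_{(s,T]}g(t-s)\,\nu(\ud t)\in L^2([\ut,T])$; hence $\psi=0$ a.e. The change of variables $r=T-s$, $w=T-t$ turns this into $(\tilde g*\hat\nu)(r)=0$ for a.e.\ $r\in(0,T-\ut)$, where $\tilde g\eqdefl g\,\mathbf{1}_{(0,\infty)}$, $\hat\nu$ is the image of $\nu$ under $t\mapsto T-t$ (a finite measure on $[0,T-\ut)$), and $*$ is the usual convolution on the line.

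The heart of the matter---and the point where Lemma~\ref{density} goes beyond \cite[Lemma 2.1]{chern}---is the final deduction $\hat\nu=0$. Because $\tilde g$ and $\hat\nu$ are supported in $[0,\infty)$, $\tilde g*\hat\nu$ also vanishes on $(-\infty,0)$, hence (a.e.) on all of $(-\infty,T-\ut)$, so $\inf\mathrm{supp}(\tilde g*\hat\nu)\ge T-\ut$. Regarding $\tilde g\in L^1_{\mathrm{loc}}([0,\infty))$ and the finite measure $\hat\nu$ as distributions with support bounded from the left and invoking Titchmarsh's convolution theorem (in Lions's distributional formulation), $\inf\mathrm{supp}(\tilde g)+\inf\mathrm{supp}(\hat\nu)=\inf\mathrm{supp}(\tilde g*\hat\nu)\ge T-\ut$. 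Condition \eqref{c.gnondeg} states exactly that $\tilde g$ is not a.e.\ zero on any $(0,\varepsilon)$, i.e.\ $\inf\mathrm{supp}(\tilde g)=0$; therefore $\inf\mathrm{supp}(\hat\nu)\ge T-\ut$, so $\hat\nu$ puts no mass strictly below $T-\ut$. But $\hat\nu$ is concentrated on $[0,T-\ut)$ and so puts no mass at or above $T-\ut$ either; hence $\hat\nu=0$, i.e.\ $\nu=0$. I expect this last step to be the only genuinely delicate part---setting up Titchmarsh's theorem for the function-times-measure convolution and keeping track of the endpoint $\ut$ and the undefined value $g(0)$---whereas everything preceding it is routine functional analysis together with Fubini.
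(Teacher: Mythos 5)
Your proof is correct, but it takes a genuinely different route from the paper's. The paper treats Cherny's result \cite[Lemma 2.1]{chern} as a black box for the case $f\equiv 1$ and then handles general $f$ by an explicit approximation: given $\tilde h$ with $K_1\tilde h$ close to the target, it sets $\hat h = (\tilde h/f)\mathbf{1}_{\{|f|>\delta\}}$ and uses $\|K_1\|_{\mathrm{op}}\le\|g\|_2$ to control the error $\|K_1\tilde h - K_1(f\hat h)\|_\infty$. Your reduction to $f\equiv 1$ is the same idea in dual packaging --- observing that $\{fh : h\in L^2\}$ is dense in $L^2([\ut,T])$ and that $K_1$ is bounded, so $\overline{\mathrm{ran}(K_f)}=\overline{\mathrm{ran}(K_1)}$ --- which is arguably cleaner than the paper's explicit truncation. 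Where you genuinely diverge is that you then reprove the $f\equiv 1$ case from scratch: Hahn--Banach and Riesz representation reduce density to the vanishing of an annihilating measure $\nu$ on $(\ut,T]$, Fubini converts the annihilation condition into $\tilde g * \hat\nu = 0$ a.e.\ on $(-\infty,T-\ut)$, and the Titchmarsh--Lions convolution theorem together with condition \eqref{c.gnondeg} (which says precisely $\inf\mathrm{supp}(\tilde g)=0$) forces $\hat\nu=0$; your handling of the endpoint (no atom of $\hat\nu$ at $T-\ut$ because $\nu$ charges no mass at $\ut$) closes the argument correctly. This is essentially the content of Cherny's own proof of his Lemma 2.1, so what your version buys is self-containedness and a transparent view of exactly where the non-degeneracy of $g$ near the origin enters, at the cost of invoking the distributional Titchmarsh theorem; the paper's version is shorter because it delegates that work to the citation.
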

\begin{rem}
The fact that $K_f$ maps always (even when $g$ does not satisfy condition \eqref{c.estim}) to $C_0([\ut,T])$ follows from the $L^2$-continuity of the translation $t \mapsto g(t-\cdot)$, see e.g.\ Theorem 9.5 of \cite{rudin}.
\end{rem}

\begin{proof}
Let $h \in C_0([\ut,T])$, $\varepsilon>0$, and denote by $K_1$ the operator defined by \eqref{intoper}, but with $f(s) \equiv 1$. By Lemma 2.1 of \cite{chern}, we know that the assertion holds for $K_1$. Thus, there exists $\tilde{h} \in L^2([\ut,T])$ such that $\|h-K_1 \tilde{h}\|_{\infty}< \varepsilon / 2$, and as $\|K_1\|_{\mathrm{op}} \leq \|g\|_2$ (by Cauchy--Schwarz), we may actually assume that $\tilde{h} \in L^{\infty}([\ut,T])$. 
Since $f \neq 0$ a.e., there exists $\delta>0$ such that the preimage $A_{\delta} \eqdefl f^{-1}([-\delta,\delta])$ satisfies $\lambda(A_{\delta})< \varepsilon^2 / (4 \|g\|^2_2\| \tilde{h}\|^2_{\infty})$. This allows us to define $\hat{h} \in L^{\infty}([\ut,T])$ by $\hat{h}(t) \eqdefl \tilde{h}(t) / f(t) \mathbf{1}_{[\ut,T] \setminus A_{\delta}}(t)$, so that
\begin{equation*}
\|\tilde{h}-f\hat{h} \|_2 = \| \tilde{h}\mathbf{1}_{A_{\delta}}\|_2 \leq \| \tilde{h}\|_{\infty} \lambda(A_{n_0})^{1/2} < \frac{\varepsilon}{2 \|g\|_2}. 
\end{equation*}
Hence $\|K_1 \tilde{h} - K_1 (f\hat{h}) \|_{\infty} < \varepsilon/2$, and finally, since $K_f \hat{h} = K_1(f\hat{h})$, we have
\begin{equation*}
\| h - K_f \hat{h}\|_{\infty} \leq \| h - K_1 \tilde{h} \|_{\infty} + \| K_1 \tilde{h} - K_1 (f\hat{h}) \|_{\infty} < \varepsilon. \qedhere
\end{equation*}
\end{proof}

\begin{proof}[Proof of Lemma \ref{support}]
We may assume that $b(t) \equiv 0$, as the extension to the general case is obvious. The reproducing kernel Hilbert space $\mathscr{H}$ associated to $J^{0,f}$ coincides with the range of $K_f$, see e.g.\ \cite[p.\ 97]{grena}. Further, it is easy to check that by condition \eqref{c.estim}, the covariance function of $J^{0,f}$ is continuous. Hence, by Theorem 3 of \cite{kalli}, $\mathrm{supp}\big(\mathrm{Law}\big[(J^{0,f}_t)_{t \in [\ut,T]}\big]\big)$ is given by the closure of $\mathscr{H}$ in $C_{0}([\ut,T])$, which by Lemma \ref{density} equals $C_{0}([\ut,T])$.
\end{proof}

As the main ingredient in the conditioning argument we are about to use, we apply the following, probably well-known result that establishes Gaussianity of certain conditional laws of stochastic integrals with respect to Brownian motion, when the integrands are independent of the integrator. It can be proved conveniently e.g.\ using \emph{conditional characteristic functions} and \emph{complex Dol\'eans exponentials}. 

\begin{lem}[Gaussianity]\label{Gaussianity} 
Suppose $\mathscr{G}\subset \mathscr{F}$ is a $\sigma$-algebra, $K^1,\ldots,K^d$ are $\mathscr{G}$-measurable random variables, $H^1,\ldots,H^d$ are $\mathscr{G}\otimes\mathscr{B}([u,v])$-measurable processes such that $\int_u^v (H^j_s)^2 \ud s <\infty$ a.s.\ for all $j=1,\dots,d$, and $(W_t)_{t \in [u,v]}$ is a standard Brownian motion independent of $\mathscr{G}$. Then, for a.e.\ $\omega \in \Omega$,
\begin{equation*}
\mathrm{Law}\bigg[ \bigg(K^1 + \int_u^v H^1_s \ud W_s, \ldots, K^d + \int_u^v H^d_s \ud W_s \bigg) \bigg| \mathscr{G}\bigg](\omega) = \mathrm{N}_d(\boldsymbol{\mu}(\omega),\boldsymbol{\Sigma}(\omega)),
\end{equation*}
where $\boldsymbol{\mu} \eqdefl (K^1,\ldots,K^d)$ and $\boldsymbol{\Sigma}_{j,k} \eqdefl \int_u^v H^j_sH^k_s \ud s$ for all $j,k = 1,\ldots,d$.
\end{lem}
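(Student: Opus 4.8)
The plan is to compute the conditional characteristic function of the random vector $V \eqdefl \big(K^1 + \int_u^v H^1_s\,\ud W_s,\ldots,K^d + \int_u^v H^d_s\,\ud W_s\big)$ given $\mathscr{G}$ and show that it coincides, $\proba$-a.s., with the characteristic function of $\mathrm{N}_d(\boldsymbol{\mu},\boldsymbol{\Sigma})$ evaluated at the (random, $\mathscr{G}$-measurable) parameters $\boldsymbol{\mu}(\omega),\boldsymbol{\Sigma}(\omega)$. Concretely, fix $\boldsymbol{\theta}=(\theta_1,\ldots,\theta_d)\in\R^d$ and set $H^{\boldsymbol{\theta}}_s \eqdefl \sum_{j=1}^d \theta_j H^j_s$, which is again $\mathscr{G}\otimes\mathscr{B}([u,v])$-measurable with $\int_u^v (H^{\boldsymbol\theta}_s)^2\,\ud s<\infty$ a.s. Writing $\langle \boldsymbol\theta, V\rangle = \langle\boldsymbol\theta,\boldsymbol\mu\rangle + \int_u^v H^{\boldsymbol\theta}_s\,\ud W_s$, it suffices to prove that
\begin{equation*}
\Xpct\big[\e^{\ima\langle\boldsymbol\theta,V\rangle}\,\big|\,\mathscr{G}\big] = \exp\Big(\ima\langle\boldsymbol\theta,\boldsymbol\mu\rangle - \tfrac12\textstyle\int_u^v (H^{\boldsymbol\theta}_s)^2\,\ud s\Big)\quad\text{a.s.}
\end{equation*}
for every $\boldsymbol\theta$; since $\int_u^v (H^{\boldsymbol\theta}_s)^2\,\ud s = \boldsymbol\theta^{\!\top}\boldsymbol\Sigma\,\boldsymbol\theta$, the right-hand side is exactly the Gaussian characteristic function, and a regular conditional law is determined by countably many such $\boldsymbol\theta$ (rationals, then continuity), which gives the claim for a.e.\ $\omega$.

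The key step is to prove the displayed identity. Because $\e^{\ima\langle\boldsymbol\theta,\boldsymbol\mu\rangle}$ is $\mathscr{G}$-measurable and bounded, it pulls out of the conditional expectation, so we must show $\Xpct\big[\exp\big(\ima\int_u^v H^{\boldsymbol\theta}_s\,\ud W_s\big)\,\big|\,\mathscr{G}\big] = \exp\big(-\tfrac12\int_u^v (H^{\boldsymbol\theta}_s)^2\,\ud s\big)$. The natural tool is the complex Dol\'eans exponential
\begin{equation*}
\mathcal{E}_t \eqdefl \exp\Big(\ima\textstyle\int_u^t H^{\boldsymbol\theta}_s\,\ud W_s + \tfrac12\textstyle\int_u^t (H^{\boldsymbol\theta}_s)^2\,\ud s\Big),\qquad t\in[u,v],
\end{equation*}
which, by It\^o's formula, solves $\mathcal{E}_t = 1 + \ima\int_u^t \mathcal{E}_s H^{\boldsymbol\theta}_s\,\ud W_s$ and is therefore a local martingale; the whole point of the computation is to leverage the independence of $H^{\boldsymbol\theta}$ and $W$ to upgrade this to a genuine (conditional) martingale property. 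The cleanest route is to condition on $\mathscr{G}$: under the regular conditional probability $\proba(\cdot\mid\mathscr{G})(\omega)$, for a.e.\ $\omega$ the process $H^{\boldsymbol\theta}$ becomes the deterministic function $s\mapsto H^{\boldsymbol\theta}_s(\omega)\in L^2([u,v])$ while $W$ remains a Brownian motion (here one uses that $W$ is independent of $\mathscr{G}$), so $\int_u^v H^{\boldsymbol\theta}_s\,\ud W_s$ is, conditionally, a genuine Wiener integral of a deterministic $L^2$ function — hence conditionally Gaussian with mean $0$ and variance $\int_u^v (H^{\boldsymbol\theta}_s(\omega))^2\,\ud s$ — and the identity $\Xpct[\e^{\ima\int H^{\boldsymbol\theta}\ud W}\mid\mathscr{G}](\omega) = \e^{-\frac12\int (H^{\boldsymbol\theta}_s(\omega))^2\,\ud s}$ follows from the scalar Gaussian characteristic function. (Equivalently, one argues directly that $\Xpct[\mathcal{E}_v\,\mathbf{1}_G] = \proba[G]$ for all $G\in\mathscr{G}$: localize by stopping times $\tau_n\uparrow v$, use that $\mathcal{E}_{\cdot\wedge\tau_n}$ is a true martingale so $\Xpct[\mathcal{E}_{\tau_n}\mathbf{1}_G]=\proba[G]$, and pass to the limit via a uniform-integrability bound — e.g.\ splitting on the $\mathscr{G}$-measurable event $\{\int_u^v(H^{\boldsymbol\theta}_s)^2\,\ud s\le m\}$ and using $|\mathcal{E}_v|\le \e^{m/2}$ there.)

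The main obstacle is precisely this integrability/localization issue: a priori $\int_u^v (H^{\boldsymbol\theta}_s)^2\,\ud s$ is only finite a.s., not bounded, so $\mathcal{E}$ need not be a true martingale and $\e^{\ima\int H^{\boldsymbol\theta}\ud W}$ need not be integrable in an obvious way against $\mathscr{G}$. The remedy — stratifying the sample space along the $\mathscr{G}$-measurable sets $\{m-1<\int_u^v(H^{\boldsymbol\theta}_s)^2\,\ud s\le m\}$ and handling each piece where the quadratic variation is bounded — is routine, and is in fact subsumed by the conditioning argument above, since on each such piece the deterministic conditional integrand lies in a ball of $L^2([u,v])$. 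Once the conditional characteristic function is identified for each rational $\boldsymbol\theta$ on a common full-measure set, continuity in $\boldsymbol\theta$ of both sides (dominated convergence on the left; inspection on the right, using $\boldsymbol\Sigma$ finite a.s.) extends it to all $\boldsymbol\theta$, and L\'evy's continuity theorem applied to the regular conditional law $\mathrm{Law}[V\mid\mathscr{G}](\omega)$ yields $\mathrm{Law}[V\mid\mathscr{G}](\omega)=\mathrm{N}_d(\boldsymbol\mu(\omega),\boldsymbol\Sigma(\omega))$ for a.e.\ $\omega$.
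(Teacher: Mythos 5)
Your argument is correct, and it follows exactly the route the paper indicates for this lemma (which it states without a detailed proof, noting only that it ``can be proved conveniently e.g.\ using conditional characteristic functions and complex Dol\'eans exponentials''): you identify the conditional characteristic function via the complex Dol\'eans exponential, handle integrability by stratifying on the $\mathscr{G}$-measurable sets where $\int_u^v (H^{\boldsymbol\theta}_s)^2\,\ud s$ is bounded, and conclude by continuity in $\boldsymbol\theta$ and uniqueness of characteristic functions. The localization and the passage from rational to all $\boldsymbol\theta$ are exactly the details that need care, and you address both adequately.
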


Now we are finally ready to prove Theorem \ref{cfsforbss}. The conditioning enters the proof so that, instead of the natural filtration of the process $Z$, we show that $Z$ has CFS with respect to a larger filtration $(\mathscr{G}_t)_{t \in [0,T]}$, given by 
\begin{equation*}
\mathscr{G}_t \eqdefl \sigma \{B_s : s \in (-\infty,t] \} \vee \sigma\{Y_s : s \in [0,T]\} \vee \sigma \{\sigma_s: s \in (-\infty,T]\}.
\end{equation*}
By Corollary 2.9 of \cite{pakka}, this is sufficient.

\begin{proof}[Proof of Theorem \ref{cfsforbss}]
Henceforth, we assume that $B$ is a \emph{standard} Brownian motion, as the extension to the general case is obvious. Fix $\ut \in [0,T)$. Let us consider the continuous process
\begin{equation*}
Z'_t \eqdefl Z_t - Z_{\ut} = \underbrace{Y_t - Z_{\ut} + \int_{-\infty}^{\ut} g(t-s)\sigma_s \ud B_s}_{\eqdefr Y'_t} + \int_{\ut}^t g(t-s) \sigma_s \ud B_s, \quad t \in [\ut,T].
\end{equation*}
(Note that $Y'$ has a continuous modification by condition \eqref{c.estim}.) Denote by $\nu$ the regular $\mathscr{G}_{\ut}$-conditional law of $Z'$ on $C_0([\ut,T])$, and by $\mu^{b,f}$ the law of the Gaussian process $J^{b,f}$ (see Lemma \ref{support}). Our aim is to show that for a.e.\ $\omega \in \Omega$, the laws $\nu(\omega,\cdot)$ and $\mu^{Y'(\omega),\sigma(\omega)}$ coincide.

Recall that these laws are defined on the Borel $\sigma$-algebra $\mathscr{B}(C_0([\ut,T]))$ that is generated as follows.
For any $\boldsymbol{t} = (t_1,\ldots t_d) \in ([\ut,T]\cap \mathbb{Q})^d$ and $\boldsymbol{q}=(q_1,\ldots,q_d) \in \mathbb{Q}^d$, 
define
\begin{equation*}
A_{\boldsymbol{t},\boldsymbol{q}} \eqdefl \bigcap_{j=1}^{d} \pi_{t_j}^{-1}((-\infty,q_j]) \subset C_0([\ut,T]).
\end{equation*}
Then, the family $\mathscr{C}\eqdefl\big\{ A_{\boldsymbol{t},\boldsymbol{q}} : \boldsymbol{t} \in ([\ut,T]\cap \mathbb{Q})^d, \boldsymbol{q} \in \mathbb{Q}^d, d=1,2,\ldots \big\}$ is a countable $\pi$-system that generates $\mathscr{B}(C_0([\ut,T]))$.

Accordingly, it suffices to show that for any $A_{\boldsymbol{t},\boldsymbol{q}} \in \mathscr{C}$, and for a.e.\ $\omega \in \Omega$,
\begin{equation}\label{marginals}
\nu(\omega,A_{\boldsymbol{t},\boldsymbol{q}}) = \mu^{Y'(\omega),\sigma(\omega)}(A_{\boldsymbol{t},\boldsymbol{q}}).
\end{equation}
But by Lemma \ref{Gaussianity} and the disintegration theorem \cite[Theorem 6.4]{kalle}, we have a.s.\
\begin{equation*}
\begin{split}
\nu(\cdot,A_{\boldsymbol{t},\boldsymbol{q}}) & = \mathbf{E}\big[\mathbf{1}_{\{Z'_{t_1}\leq q_1,\ldots Z'_{t_d} \leq q_d\}}\big|\mathscr{G}_{\ut}\big] \\
& = \int_{-\infty}^{q_1}\cdots \int_{-\infty}^{q_d} \phi_d(x_1,\ldots,x_d;\boldsymbol{\mu},\boldsymbol{\Sigma}) \ud x_1\cdots \ud x_d,
\end{split}
\end{equation*}
where $\boldsymbol{\mu} \eqdefl (Y'_{t_1},\ldots,Y'_{t_d})$ and $\boldsymbol{\Sigma}_{j,k} \eqdefl \int_{\ut}^{t_j \wedge t_k} g(t_j - s) g(t_k -s) \sigma^2_s \ud s$ for all $j,k = 1,\ldots,d$, implying \eqref{marginals}.

Since we assumed that $\proba\big[ \lambda(\{ t \in[0,T]: \sigma_t = 0 \}) = 0 \big]=1$, we have now by Lemma \ref{support} for a.e.\ $\omega \in\Omega$,
\begin{equation*}
\mathrm{supp}(\nu(\omega,\cdot)) = \mathrm{supp}\big( \mu^{Y'(\omega),\sigma(\omega)}\big) = C_0([\ut,T]),
\end{equation*}
which implies the assertion.
\end{proof}

\section*{Acknowledgements}

I would like thank Ole E.\ Barndorff-Nielsen for asking, and Mark Podolskij for re-asking the question, whether Brownian semistationary processes have conditional full support; and for commenting on this paper. Moreover, I am grateful to Tommi Sottinen for discussions and comments. 
The research in the paper was supported by the Academy of Finland (project 116747) and the Finnish Cultural Foundation.


\begin{thebibliography}{10}

\bibitem{barnd2}
{\sc O.~E. Barndorff-Nielsen, J.~M. Corcuera, and M.~Podolskij}, {\em Limit
  theorems for functionals of higher order differences of {B}rownian
  semi-stationary processes}.
\newblock Preprint, 2009.

\bibitem{barnd3}
\leavevmode\vrule height 2pt depth -1.6pt width 23pt, {\em Multipower variation
  for {B}rownian semistationary processes}.
\newblock Preprint, 2009.

\bibitem{barnd}
{\sc O.~E. Barndorff-Nielsen and J.~Schmiegel}, {\em Brownian semistationary
  processes and volatility/intermittency}.
\newblock Preprint, 2009.

\bibitem{chern}
{\sc A.~Cherny}, {\em Brownian moving averages have conditional full support},
  Ann. Appl. Probab., 18 (2008), pp.~1825--1830.

\bibitem{delba}
{\sc F.~Delbaen and W.~Schachermayer}, {\em A general version of the
  fundamental theorem of asset pricing}, Math. Ann., 300 (1994), pp.~463--520.

\bibitem{grena}
{\sc U.~Grenander}, {\em Abstract inference}, Wiley, New York, 1981.

\bibitem{guaso2}
{\sc P.~Guasoni, M.~R{\'a}sonyi, and W.~Schachermayer}, {\em Consistent price
  systems and face-lifting pricing under transaction costs}, Ann. Appl.
  Probab., 18 (2008), pp.~491--520.

\bibitem{kalle}
{\sc O.~Kallenberg}, {\em Foundations of modern probability}, Springer, New
  York, 2nd~ed., 2002.

\bibitem{kalli}
{\sc G.~Kallianpur}, {\em Abstract {W}iener processes and their reproducing
  kernel {H}ilbert spaces.}, Z. Wahrscheinlichkeitstheorie und Verw. Gebiete,
  17 (1971), pp.~113--123.

\bibitem{pakka}
{\sc M.~S. Pakkanen}, {\em Stochastic integrals and conditional full support}.
\newblock Preprint (available as arXiv:0811.1847), 2009.

\bibitem{rudin}
{\sc W.~Rudin}, {\em Real and complex analysis}, McGraw-Hill, New York,
  3rd~ed., 1987.

\end{thebibliography}
\end{document}